\newtheorem{theorem}{Theorem}% [section]
\newtheorem{definition}[theorem]{Definition}
\newtheorem{remark}[theorem]{Remark}
\newtheorem{lemma}[theorem]{Lemma}
\begin{document}
\title[Exponential convergence for ultrafast diffusion]{On the exponential convergence to equilibrium for ultrafast diffusion equations}

\author{Yi C. Huang} 
\address{School of Mathematical Sciences, Nanjing Normal University, Nanjing 210023, People's Republic of China}
\email{Yi.Huang.Analysis@gmail.com}
\urladdr{https://orcid.org/0000-0002-1297-7674}

\author{Xinhang Tong}
\address{School of Mathematical Sciences, Nanjing Normal University, Nanjing 210023, People's Republic of China}
\email{letterwoodtxh@gmail.com}

\date{\today} 

%\subjclass[2020]{Primary 35B40. Secondary 26D10.}  
\keywords{Ultrafast diffusion, exponential convergence, Poincar\'e inequality}

\thanks{Research of YCH is partially supported by the National NSF grant of China (no. 11801274), 
the JSPS Invitational Fellowship for Research in Japan (no. S24040), and the Open Projects from Yunnan Normal University (no. YNNUMA2403) and Soochow University (no. SDGC2418). 
Both authors thank Professor Max Fathi for sharing generously his insights on this problem.
YCH thanks in particular Jian-Yang Zhang (Shanghai) and Yuzhe Zhu (Chicago) for helpful communications,
and Qi Guo (Beijing) for hospitality at Renmin University where this manuscript is finalised.}

\maketitle
\begin{abstract}
We propose a simple proof of the exponential convergence to equilibrium for ultrafast diffusion equations in $\mathbb{R}^n$. 
Our approach, based on the direct use of Poincar\'e inequality, gets rid of the optimal transport arguments used in \cite{fathi2025}
which are valid for Gaussian-excluded one-dimensional weights. 
This simplification allows us to extend their results to Gaussian measures in higher dimensions.
\end{abstract}

\section{Introduction}

In \cite{Iacobelli19}, M. Iacobelli obtained the exponential convergence for the following so-called ``ultrafast diffusion equation" 
(see for example \cite{Bonforte17}, \cite{Bonforte2017} and \cite{Iacobelli2019} for the relevant background materials) with periodic boundary conditions:
\begin{equation*}
\begin{cases}
\partial_t f(t,x) = -r \, \partial_x \left( f(t,x) \, \partial_x \left( \frac{\rho(x)}{f^{r+1}(t,x)} \right) \right) &\quad \text{on } \quad(0,\infty) \times [0,1], \\[6pt]
f(t,0) = f(t,1) & \quad\text{on }\quad (0,\infty),
\end{cases}
\end{equation*}
where $r>1$ and both $\rho>0$ and $f(t, \cdot)$ are probability densities on $[0,1]$.
Recently in \cite{fathi2025}, M. Fathi and M. Iacobelli extended the results of \cite{Iacobelli19} to the non-compact case (namely, the whole real line $\mathbb{R}$). 
They considered the solutions $f(t, \cdot)$ of  
\begin{equation*}
\partial_t f(t,x) = -r \partial_x \left( f(t,x)  \partial_x \left( \frac{\rho(x)}{f^{r+1}(t,x)} \right) \right) \quad\text{on }\quad (0,\infty) \times \mathbb{R},
\end{equation*}
where $r>1$ and $\rho>0$ is a probability density on $\mathbb{R}$.
However, the approach in \cite{fathi2025} relies crucially on the optimal transport arguments which are valid for restrictive (Gaussian-excluded) one-dimensional weights. 
In this paper, with a rather simple argument, we extend the results of \cite{fathi2025} to Gaussian measures in higher dimensions. 

For the purpose as mentioned above, we shall consider the following (higher-dimensional) ultrafast diffusion equation with initial data $f_{0}(x):=f(0,x)$,
\begin{equation}\label{UFDE}
\partial_t f(t,x) = -r \, \mathrm{div}_{x}\left(f(t ,x) \nabla_{x} \left(\frac{\rho(x)}{f^{r+1}(t, x)}\right)\right) \quad \text{on } \quad(0, \infty) \times \mathbb{R}^n,
\end{equation}
where $r>1$ and $\rho>0$ is a probability density on $\mathbb{R}^n$.
According to the framework laid out in \cite{Iacobelli2019} and \cite{fathi2025}, one is led to study the following functional difference
$$
\mathcal{F} _{\rho}[f]-\mathcal{F} _{\rho}[m]:=\int_{\mathbb{R}^n}\frac{\rho(x)}{f^r(x)}dx-\int_{\mathbb{R}^n}\frac{\rho(x)}{m^r(x)}dx,
$$ 
where, for the solution $f$ we abuse the notation by dropping out the $t$ variable, and
$$m:=\gamma\rho^{1/(r+1)}\quad\text{ with}\quad \gamma :=\frac{1}{\int_{\mathbb{R}^n}\rho^{1/(r+1)}dx}.$$ 
Here $m$ is the equilibrium density of \eqref{UFDE} and $\gamma$ is the normalisation constant.

We have the following natural two-sided bounds. 

\begin{lemma}\label{asy}
Let $\mathcal{P}(\mathbb{R}^n)$ denote the space of probability densities on $\mathbb{R}^n$.
Assume that 
$$
f_0\in\mathcal{P}_{c,C}:=\{g\in\mathcal{P}(\mathbb{R}^n):cm\le g\le Cm\}, \quad C\geq1\geq c>0.
$$ 
Then there exist constants $k_1$ and $k_2$, depending on $r$, $c$, $C$ and $\gamma$, such that
\begin{equation*}\label{twoside}
\begin{aligned}
&k_1\int_{\mathbb{R}^n} \left|\frac{f}{m}-1\right|^2 mdx \le\mathcal{F} _{\rho}[f]-\mathcal{F} _{\rho}[m]\le k_2\int_{\mathbb{R}^n} \left|\frac{f}{m}-1\right|^2 mdx.
\end{aligned}
\end{equation*}
\end{lemma}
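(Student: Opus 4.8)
The plan is to reduce the claimed functional bound to an elementary one-variable convexity estimate. First I would eliminate $\rho$ in favour of $m$: since $m = \gamma\rho^{1/(r+1)}$ we have $\rho = \gamma^{-(r+1)}m^{r+1}$, so writing $u := f/m$ the integrand becomes, pointwise,
\[
\frac{\rho}{f^r} - \frac{\rho}{m^r} = \gamma^{-(r+1)}\left(\frac{m^{r+1}}{f^r} - m\right) = \gamma^{-(r+1)}\left(u^{-r} - 1\right)m.
\]
Hence $\mathcal{F}_{\rho}[f]-\mathcal{F}_{\rho}[m] = \gamma^{-(r+1)}\int_{\mathbb{R}^n}(u^{-r}-1)\,m\,dx$, and the target is to sandwich this quantity between multiples of $\int_{\mathbb{R}^n}(u-1)^2\,m\,dx$.

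The decisive structural fact is that both $f$ and $m$ are probability densities, so $\int_{\mathbb{R}^n}(u-1)\,m\,dx = \int_{\mathbb{R}^n}f\,dx - \int_{\mathbb{R}^n}m\,dx = 0$. This lets me insert a free linear term: setting
\[
g(u) := u^{-r} - 1 + r(u-1),
\]
I obtain $\int_{\mathbb{R}^n}(u^{-r}-1)\,m\,dx = \int_{\mathbb{R}^n}g(u)\,m\,dx$. A direct computation gives $g(1)=0$, $g'(1)=0$ and $g''(u)=r(r+1)u^{-r-2}>0$, so $g$ is strictly convex on $(0,\infty)$ with unique minimum value $0$ attained at $u=1$; in particular $g\ge 0$, with equality only at $u=1$.

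It then remains to produce two-sided bounds $k_1'(u-1)^2\le g(u)\le k_2'(u-1)^2$ on the relevant range of $u$, after which $k_i=\gamma^{-(r+1)}k_i'$ finishes the proof. Here I would use the pointwise control $c\le u\le C$: on the compact interval $[c,C]$ the quotient $g(u)/(u-1)^2$ is continuous away from $u=1$ and, by the Taylor expansion above, extends continuously across $u=1$ with limiting value $g''(1)/2 = r(r+1)/2 > 0$. Being continuous and strictly positive on a compact set, it attains a positive minimum $k_1'$ and a finite maximum $k_2'$, both depending only on $r$, $c$, $C$; restoring the factor $\gamma^{-(r+1)}$ produces constants with exactly the stated dependence on $r$, $c$, $C$ and $\gamma$.

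The one point needing care — and the main obstacle — is the range of $u$. The hypothesis only constrains the initial datum $f_0\in\mathcal{P}_{c,C}$, whereas the inequality concerns the solution $f=f(t,\cdot)$; so I must invoke (or verify separately) that the flow \eqref{UFDE} preserves the sandwich $cm\le f\le Cm$, i.e.\ that $\mathcal{P}_{c,C}$ is invariant, to guarantee $u(t,\cdot)\in[c,C]$ for all $t$. Granting this comparison-type property, everything else is a purely algebraic convexity argument and the resulting constants are uniform in time.
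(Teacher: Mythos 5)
Your proposal is correct, and its skeleton is exactly the paper's: rewrite $\rho=\gamma^{-(r+1)}m^{r+1}$ to get $\mathcal{F}_\rho[f]-\mathcal{F}_\rho[m]=\gamma^{-(r+1)}\int(u^{-r}-1)\,m\,dx$, use the mass normalisation $\int(u-1)\,m\,dx=0$ to insert the linear term $r(u-1)$ for free, and then compare $g(u)=u^{-r}-1+r(u-1)$ pointwise with $(u-1)^2$ on $[c,C]$. You also correctly flag the one hypothesis-level subtlety — that the bound $c\le u\le C$ concerns the solution $f(t,\cdot)$, not just $f_0$ — which the paper handles by propagating $\mathcal{P}_{c,C}$ along the flow via the maximum principle (Remark \ref{rem} and Appendix \ref{exuni}).

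The only real divergence is the finishing step. The paper writes the Lagrange form of the Taylor remainder, $g(u)=\frac{r(r+1)}{2}\theta^{-(r+2)}(u-1)^2$ with $\theta$ between $1$ and $u$, and bounds $\theta^{-(r+2)}$ between $C^{-(r+2)}$ and $c^{-(r+2)}$; this yields the explicit constants $k_1=\frac{r(r+1)}{2\gamma^{r+1}}C^{-(r+2)}$ and $k_2=\frac{r(r+1)}{2\gamma^{r+1}}c^{-(r+2)}$. You instead argue softly: the ratio $g(u)/(u-1)^2$ extends continuously across $u=1$ with value $r(r+1)/2$ and is positive on the compact set $[c,C]$, hence attains a positive minimum and finite maximum. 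This is perfectly valid for the lemma as stated (which only asserts existence of $k_1,k_2$), but it buys less downstream: the proof of Theorem \ref{main} explicitly traces the formula for $k_2$ to produce the constant $K=\frac{C_P C^{2r+3}}{2r(r+1)\gamma^{r+1}c^{r+2}}$, and the decay rate in Theorem \ref{convergence} depends on $1/K$ and $k_1$. So if you intend your lemma to feed into the quantitative convergence statement, you should keep the Lagrange-remainder version (or extract the explicit extrema of $g(u)/(u-1)^2$, which monotonicity of $\theta\mapsto\theta^{-(r+2)}$ gives at no extra cost).
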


\begin{remark} \label{rem}
Under the assumption $f_0\in\mathcal{P}_{c,C}$, 
the existence and uniqueness of a solution $f(t)$ to \eqref{UFDE} with $f(t,\cdot)\in\mathcal{P}_{c,C}$ for all $t>0$ will be justified in Appendix \ref{exuni}. 
\end{remark}

As already well observed in \cite{Iacobelli2019} and \cite{fathi2025}, 
the exponential convergence of solutions $f(t, \cdot)$ to equilibrium $m$ can be reduced to the following crucial estimate
\begin{equation}\label{control}
\mathcal{F} _{\rho}[f]-\mathcal{F} _{\rho}[m] \le K\,I_{\rho}[f], \quad\forall f \in\mathcal{P}_{c,C},
\end{equation}
where $K$ is a constant and, by letting $u=f/m$,
$$I_{\rho}[f]:=-\frac{d}{dt}\mathcal{F} _{\rho}[f]=r^2\int_{\mathbb{R}^n} u|\nabla (u^{-(r+1)})|^2 mdx.$$ 
Given above lemma and in order to establish \eqref{control}, it is natural to invoke

\begin{definition}[Poincar\'e Inequality]
Let $m$ be a  probability measure on $\mathbb{R}^n$.
We say that the Poincar\'e inequality (associated to $m$) holds, with constant $C_P=C_P(m)$, if
\begin{equation*}
\int_{\mathbb{R}^n} g^2m dx-\left(\int_{\mathbb{R}^n} gm dx\right)^2\le C_P \int_{\mathbb{R}^n} |\nabla g|^2 mdx,\quad \forall \,g.
\end{equation*}
\end{definition}

\begin{remark}
Log-concave measures (in particular, Gaussian measures) satisfy the Poincar\'e inequality (see Bobkov \cite{Bobkov99}).
Certain non-log-concave measures also satisfy the Poincar\'e inequality (see \cite[Theorem 3]{barthe13} and \cite[Theorem 3.2]{bonnefont2021}).
\end{remark}

Now we are able to state the main contribution of this paper.  

\begin{theorem}\label{main}
The estimate \eqref{control} holds with
$$
K=\frac{C_P C^{2r+3}}{2r(r+1)\gamma^{r+1}c^{r+2}}.
$$ 
\end{theorem}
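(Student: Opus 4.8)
The plan is to prove \eqref{control} by chaining three inequalities: an explicit upper bound for the free-energy gap $\mathcal{F}_{\rho}[f]-\mathcal{F}_{\rho}[m]$ in terms of the weighted $L^2$-distance $\int|u-1|^2m\,dx$ (writing $u=f/m$), then the Poincar\'e inequality to pass from that distance to the Dirichlet energy $\int|\nabla u|^2 m\,dx$, and finally a pointwise lower bound for $I_{\rho}[f]$ in terms of the same Dirichlet energy. Since the second and third steps each contribute one explicit factor, the constant $K$ should assemble from them together with the constant $k_2$ of Lemma \ref{asy}.

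First I would express everything through $u=f/m$. Using $m=\gamma\rho^{1/(r+1)}$, i.e. $\rho=\gamma^{-(r+1)}m^{r+1}$, one finds
\[
\mathcal{F}_{\rho}[f]-\mathcal{F}_{\rho}[m]=\gamma^{-(r+1)}\int_{\mathbb{R}^n} m\bigl(u^{-r}-1\bigr)\,dx.
\]
Because $f$ and $m$ are both probability densities, $\int u\,m\,dx=\int f\,dx=1=\int m\,dx$, so $\int m\,(u-1)\,dx=0$ and I may freely subtract the vanishing linear term. Setting $\psi(u):=u^{-r}-1+r(u-1)$, the gap equals $\gamma^{-(r+1)}\int m\,\psi(u)\,dx$. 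Now $\psi(1)=\psi'(1)=0$ and $\psi''(s)=r(r+1)s^{-(r+2)}$, so Taylor's theorem, together with the constraint $c\le u\le C$ which forces the intermediate point into $[c,C]$, yields $\psi(u)\le\tfrac{r(r+1)}{2}\,c^{-(r+2)}(u-1)^2$. This is exactly the upper half of Lemma \ref{asy}, with the explicit value $k_2=\tfrac{r(r+1)}{2\gamma^{r+1}c^{r+2}}$.

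Next I apply the Poincar\'e inequality to $g=u$. Since $\int u\,m\,dx=1$, its left-hand side is precisely $\int u^2 m\,dx-1=\int|u-1|^2 m\,dx$, giving
\[
\int_{\mathbb{R}^n}|u-1|^2 m\,dx\le C_P\int_{\mathbb{R}^n}|\nabla u|^2 m\,dx.
\]
For the last step, expanding $\nabla\!\bigl(u^{-(r+1)}\bigr)=-(r+1)u^{-(r+2)}\nabla u$ gives
\[
I_{\rho}[f]=r^2(r+1)^2\int_{\mathbb{R}^n} u^{-(2r+3)}|\nabla u|^2 m\,dx\ \ge\ \frac{r^2(r+1)^2}{C^{2r+3}}\int_{\mathbb{R}^n}|\nabla u|^2 m\,dx,
\]
where I used $u\le C$, so that $u^{-(2r+3)}\ge C^{-(2r+3)}$. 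Multiplying the three displayed bounds collapses the constants to
\[
\mathcal{F}_{\rho}[f]-\mathcal{F}_{\rho}[m]\le k_2\,C_P\,\frac{C^{2r+3}}{r^2(r+1)^2}\,I_{\rho}[f]=\frac{C_P\,C^{2r+3}}{2r(r+1)\gamma^{r+1}c^{r+2}}\,I_{\rho}[f],
\]
which is the claim.

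I do not expect a serious obstacle here: the argument is essentially just \emph{Poincar\'e plus convexity}, which is the whole point of bypassing the optimal-transport machinery of \cite{fathi2025}. The one place demanding care is the explicit upper bound, where the decisive observation is that the linear term integrates to zero; this is what legitimizes the purely second-order $(u-1)^2$ comparison and keeps the intermediate value of the Taylor remainder inside $[c,C]$ (valid since $1\in[c,C]$). Beyond that, the only real work is bookkeeping the exponents so that the factors $k_2$, $C_P$, and $C^{2r+3}/\bigl(r^2(r+1)^2\bigr)$ multiply to the stated $K$.
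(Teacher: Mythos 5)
Your proof is correct and takes essentially the same route as the paper's: the upper bound of Lemma \ref{asy} (whose Taylor-expansion derivation you simply reproduce inline), the Poincar\'e inequality applied to $u$ (equivalent to the paper's choice $g=u-1$ since $\int (u-1)m\,dx=0$), and the pointwise identity $u|\nabla(u^{-(r+1)})|^2=(r+1)^2u^{-(2r+3)}|\nabla u|^2$ bounded below via $u\le C$, assembling to the same constant $K$. The only cosmetic difference is that the paper quotes $k_2$ from Lemma \ref{asy} rather than re-deriving it inside the theorem's proof.
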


Then, by Gronwall type arguments in conjunction with Lemma \ref{asy}, we obtain the exponential convergence to equilibrium for ultrafast diffusion equations in $\mathbb{R}^n$.

\begin{theorem}\label{convergence}
Let $f(t)$ be the solution to \eqref{UFDE} with $f_{0}\in\mathcal{P}_{c,C}$. 
Then for all $t>0$,
\begin{equation*}
\int_{\mathbb{R}^n} \left| \frac{f(t)}{m}-1\right|^2 mdx\le \frac{1}{k_1}\left(\int_{\mathbb{R}^n}\frac{\rho(x)}{f_0^r(x)} dx-\frac{1}{\gamma^{r+1}}\right)\cdot e^{-\frac{1}{K}t},
\end{equation*}
where $k_1$ is given in Lemma \ref{asy} and $K$ is given in Theorem \ref{main}.
 \end{theorem}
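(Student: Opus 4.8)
The plan is to run a Gronwall (Lyapunov) argument on the functional difference itself. Set
$$
\Phi(t):=\mathcal{F}_{\rho}[f(t)]-\mathcal{F}_{\rho}[m],
$$
which by Remark \ref{rem} is well-defined and (one expects) differentiable in $t$, since $f(t,\cdot)\in\mathcal{P}_{c,C}$ for all $t>0$. The two ingredients already in hand fit together immediately: since $\mathcal{F}_{\rho}[m]$ is constant in $t$, the definition of the dissipation gives $\Phi'(t)=\frac{d}{dt}\mathcal{F}_{\rho}[f(t)]=-I_{\rho}[f(t)]$, while Theorem \ref{main} applied to $f(t)\in\mathcal{P}_{c,C}$ yields $\Phi(t)\le K\,I_{\rho}[f(t)]$. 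Combining these produces the closed differential inequality
$$
\Phi'(t)\le-\frac{1}{K}\,\Phi(t),\qquad t>0.
$$

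Next I would integrate this inequality. Multiplying by the integrating factor $e^{t/K}$ gives $\frac{d}{dt}\!\left(e^{t/K}\Phi(t)\right)=e^{t/K}\!\left(\Phi'(t)+\tfrac{1}{K}\Phi(t)\right)\le 0$, so $e^{t/K}\Phi(t)$ is nonincreasing and hence
$$
\Phi(t)\le\Phi(0)\,e^{-t/K},\qquad t>0.
$$
It then remains to identify $\Phi(0)$ explicitly and to convert $\Phi(t)$ back into the weighted $L^2$ distance. For the first, using $m=\gamma\rho^{1/(r+1)}$ one computes $\rho/m^{r}=\gamma^{-r}\rho^{1/(r+1)}$, so that
$$
\mathcal{F}_{\rho}[m]=\int_{\mathbb{R}^n}\frac{\rho}{m^{r}}\,dx=\gamma^{-r}\int_{\mathbb{R}^n}\rho^{1/(r+1)}\,dx=\gamma^{-r}\cdot\gamma^{-1}=\frac{1}{\gamma^{r+1}},
$$
the last step using the normalisation $\gamma^{-1}=\int_{\mathbb{R}^n}\rho^{1/(r+1)}\,dx$. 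Therefore $\Phi(0)=\int_{\mathbb{R}^n}\rho\,f_0^{-r}\,dx-\gamma^{-(r+1)}$, matching the factor in the statement. For the conversion, the lower bound in Lemma \ref{asy} applied at time $t$ gives $k_1\int_{\mathbb{R}^n}|f(t)/m-1|^2\,m\,dx\le\Phi(t)$; dividing through by $k_1$ and inserting the two displays above yields exactly the claimed estimate.

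The routine analytic steps are the only real subtlety. In particular, the passage $\Phi'(t)=-I_{\rho}[f(t)]$ rests on differentiating $\mathcal{F}_{\rho}[f(t)]$ under the integral sign and an integration by parts along the flow \eqref{UFDE}, which requires the uniform enclosure $cm\le f(t,\cdot)\le Cm$ to control the integrands; this is precisely what Remark \ref{rem} and the existence-uniqueness theory of Appendix \ref{exuni} provide, and it is also what guarantees that Theorem \ref{main} and Lemma \ref{asy} may be invoked at \emph{every} $t>0$ with the same constants. Granting this propagation of the bounds, the argument is entirely elementary, which is the point of the paper's approach.
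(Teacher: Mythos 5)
Your proposal is correct and follows exactly the route the paper intends: the paper gives no written-out proof of Theorem \ref{convergence}, stating only that it follows ``by Gronwall type arguments in conjunction with Lemma \ref{asy}'', and your argument — the differential inequality $\Phi'(t)=-I_{\rho}[f(t)]\le -\Phi(t)/K$ from Theorem \ref{main}, integration via the factor $e^{t/K}$, the identification $\mathcal{F}_{\rho}[m]=\gamma^{-(r+1)}$, and the conversion to the weighted $L^2$ distance via the lower bound of Lemma \ref{asy} — is precisely that sketch made explicit, with the propagation $f(t,\cdot)\in\mathcal{P}_{c,C}$ from Remark \ref{rem} correctly invoked to justify applying the lemma and theorem at every $t>0$.
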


Already for $m$ being the Gaussians, our result answers the open problems in \cite{fathi2025}.
Also, for compactly supported $f_0$ and $m$, Theorem \ref{convergence} covers \cite[Theorem 1.4]{Iacobelli2019}.

\section{Proofs}
\begin{proof}[Proof of Lemma \ref{asy}]
Recall $u=\frac{f}{m}$. By a simple calculation, we obtain
\begin{equation}\label{simple}
\begin{aligned}
\mathcal{F} _{\rho}[f]-\mathcal{F} _{\rho}[m]
&=\frac{1}{\gamma^{r+1}}\int_{\mathbb{R}^n} \left(u^{-r}-1\right)m dx\\
&=\frac{1}{\gamma^{r+1}}\int_{\mathbb{R}^n} \left(u^{-r}-1+r\left(u-1\right)\right)m dx.
\end{aligned}
\end{equation}
Considering the Taylor expansion of $x^{-r}$ at $x=1$, \eqref{simple} can be rewritten as
$$
\mathcal{F} _{\rho}[f]-\mathcal{F} _{\rho}[m]=\frac{r(r+1)}{2\gamma^{r+1}}\int_{\mathbb{R}^n} \theta^{-(r+2)}|u-1|^2 m dx,
$$
where $\theta\in(\min\{1,u\}, \max\{1,u\})$. Since $0<c\le u\le C$ (see Remark \ref{rem}), we obtain
\begin{equation*}\label{twobound}
\begin{aligned}
&\frac{r(r+1)}{2\gamma^{r+1}}C^{-(r+2)}\int_{\mathbb{R}^n} \left|u-1\right|^2 mdx\\
&\qquad\qquad\le\mathcal{F} _{\rho}[f]-\mathcal{F} _{\rho}[m]\\
&\qquad\qquad\le\frac{r(r+1)}{2\gamma^{r+1}}c^{-(r+2)}\int_{\mathbb{R}^n} \left|u-1\right|^2 mdx.
\end{aligned}
\end{equation*}
Here we also used implicitly $C\geq1\geq c$. This proves the lemma.
\end{proof}

\begin{proof}[Proof of Theorem \ref{main}]
Applying Poincar\'e inequality to $g=u-1=\frac{f}{m}-1$, and noting
$$\int_{\mathbb{R}^n} gm =\int_{\mathbb{R}^n} f-\int_{\mathbb{R}^n} m=1-1=0,$$ 
we obtain (upon tracing the expression of $k_2$ in the proof of Lemma \ref{asy})
$$
\mathcal{F} _{\rho}[f]-\mathcal{F} _{\rho}[m]\le C_P\frac{r(r+1)}{2\gamma^{r+1}}c^{-(r+2)}\int_{\mathbb{R}^n} \left|\nabla u\right|^2 mdx.
$$
Since $0<c\le u\le C$ (see Remark \ref{rem}), we also have
$$
\int_{\mathbb{R}^n} \left|\nabla u\right|^2 mdx\le \frac{C^{2r+3}}{(r+1)^2}\int_{\mathbb{R}^n} u|\nabla (u^{-(r+1)})|^2 mdx=\frac{C^{2r+3}}{r^2(r+1)^2} I_{\rho}[f].
$$
Collecting the above two inequalities then proves the theorem.
\end{proof}

%\begin{proof}[Proof of Theorem \ref{convergence}]
%Applying Gronwall’s lemma in the time variable $t$ to \eqref{control}, we obtain
%$$
%\mathcal{F} _{\rho}[f]-\mathcal{F} _{\rho}[m]\le e^{-\frac{1}{K}t}.
%$$
%With the two-side bound in Lemma \ref{asy}, we obtain the exponential convergence to equilibrium for ultrafast diffusion equations,
%$$
%k_1\int_{\mathbb{R}^n} \left|\frac{f}{m}-1\right|^2 mdx \le\mathcal{F} _{\rho}[f]-\mathcal{F} _{\rho}[m]\le e^{-\frac{1}{K}t}.
%$$
%\end{proof}

\appendix

\section{On the existence and uniqueness of solutions}\label{exuni}

Here adapting \cite[Appendix A]{fathi2025} for the one-dimensional case, 
we show the well-posedness of the ultrafast diffusion equation \eqref{UFDE} in $\mathbb R^n$ in the class $\mathcal{P}_{c,C}$.

Let $f_{0}\in\mathcal{P}_{c,C}$ for some $c,C>0$ and write $m=e^{-V}$.
Given $k\ge 1$, we consider
$$
V_{k}:=
\begin{cases}
a_k V, & \text{in }B(0,k),\\[6pt]
+\infty,  & \text{otherwise},
\end{cases}
$$
where $a_k>0$ is chosen such that $m_k:=e^{-V_k}$ is a probability density in $\mathbb{R}^n$, 
or more precisely, in $B(0,k):=\{x\in\mathbb{R}^n:|x|\le k\}$. Also, we consider the initial data
$$
f_{0}^{k}:=
\begin{cases}
b_k f_0, & \text{in }B(0,k),\\[6pt]
0, & \text{otherwise},
\end{cases}
$$
where $b_k>0$ is chosen such that $f_{0}^{k}\in\mathcal{P}(B(0,k))$ (that is, $f_{0}^{k}$ is a probability density in $B(0,k)$). 
Note that $a_k$, $b_k\to 1$ as $k\to +\infty$, hence
$$
f_{0}^{k}\in\mathcal{P}_{c_k,C_k}^{k}:=\{g\in\mathcal{P}(B(0,k)):c_k m_k\le g\le C_k m_k\},
$$
where $c_k\to c$ and $C_k\to C$ as $k\to +\infty$.

Now, since $m_k$ is compactly supported, applying \cite[Theorem 1.2]{Iacobelli2019} 
we obtain the existence and uniqueness of the solution $f^{k}(t)\in\mathcal{P}^{k}_{c_k, C_k}$ to the ultrafast diffusion equation posed on $B(0, k)$ 
with Neumann boundary conditions and initial data $f_{0}^k$. 
In the process we have to invoke the maximum principle (see \cite[Corollary 3.8]{Iacobelli2019}) to guarantee that 
$f^{k}(t)$ inherits faithfully the constants $c_k$ and $C_k$ from $f_{0}^k\in\mathcal{P}^{k}_{c_k, C_k}$.
Moreover, for any $R<k$, we obtain from \cite[Lemma 3.3]{Iacobelli2019} the compactness of the family $\{f^k|_{B(0, R)}\}$. 
Hence, recalling that $c_k\to c$ and $C_k\to C$ as $k\to +\infty$,
by a diagonal argument we see that $f^{k}$ converges (up to a subsequence) to $f\in\mathcal{P}_{c,C}$, where $f(0)=f_0\in\mathcal{P}_{c,C}$ and $f(t)$ solves \eqref{UFDE}.
Also, upon localisation of $f_0$, the uniqueness of $f$ follows from the $L^1$-contractivity (see \cite[Proposition 3.6]{Iacobelli2019}).

\bigskip

\section*{\textbf{Compliance with ethical standards}}

\bigskip

\textbf{Conflict of interest} The authors have no known competing financial interests or personal relationships that could have appeared to influence this reported work.

\bigskip

\textbf{Availability of data and material} Not applicable.

\bigskip

\bibliographystyle{alpha}
\bibliography{newapproach}

\begin{thebibliography}{BDMN17b}

\bibitem[BCE13]{barthe13}
Franck Barthe and Dario Cordero-Erausquin.
\newblock Invariances in variance estimates.
\newblock {\em Proceedings of the London Mathematical Society}, 106(1):33--64,
  2013.

\bibitem[BDMN17a]{Bonforte17}
Matteo Bonforte, Jean Dolbeault, Matteo Muratori, and Bruno Nazaret.
\newblock Weighted fast diffusion {equations (Part I): Sharp asymptotic rates
  without symmetry and symmetry breaking in Caffarelli-Kohn-Nirenberg
  inequalities}.
\newblock {\em Kinetic and Related Models}, 10(1):33--59, 2017.

\bibitem[BDMN17b]{Bonforte2017}
Matteo Bonforte, Jean Dolbeault, Matteo Muratori, and Bruno Nazaret.
\newblock Weighted fast diffusion {equations (Part II): Sharp} asymptotic rates
  of convergence in relative error by entropy methods.
\newblock {\em Kinetic and Related Models}, 10(1):61--91, 2017.

\bibitem[BJ21]{bonnefont2021}
Michel Bonnefont and Ald{\'e}ric Joulin.
\newblock Intertwinings, second-order brascamp--lieb inequalities and spectral
  estimates.
\newblock {\em Studia Mathematica}, 260:285--316, 2021.

\bibitem[Bob99]{Bobkov99}
Sergei~G. Bobkov.
\newblock Isoperimetric and analytic inequalities for log-concave probability
  measures.
\newblock {\em The Annals of Probability}, 27(4):1903--1921, 1999.

\bibitem[FI25]{fathi2025}
Max Fathi and Mikaela Iacobelli.
\newblock Exponential convergence for ultrafast diffusion equations with
  log-concave weights.
\newblock {\em arXiv:2507.13060}, 2025.

\bibitem[Iac19]{Iacobelli19}
Mikaela Iacobelli.
\newblock Asymptotic analysis for a very fast diffusion equation arising from
  {the 1D quantization} problem.
\newblock {\em Discrete and Continuous Dynamical Systems}, 39(9):4929--4943,
  2019.

\bibitem[IPS19]{Iacobelli2019}
Mikaela Iacobelli, Francesco~S. Patacchini, and Filippo Santambrogio.
\newblock Weighted ultrafast diffusion equations: From well-posedness to
  long-time behaviour.
\newblock {\em Archive for Rational Mechanics and Analysis}, 232(3):1165--1206,
  2019.

\end{thebibliography}

\end{document}